\newtheorem{theorem}{Theorem}
\newtheorem{lemma}[theorem]{Lemma}
\newtheorem{remark}[theorem]{Remark}
\newenvironment{proof}[1][Proof]{\noindent\textbf{#1.} }{\ \rule{0.5em}{0.5em}}
\titleformat{\section}[block]{\sffamily\Large\bfseries\filcenter}{\thesection}{1em}{}
\begin{document}
\title{A new class of solvable nonlinear difference equation systems}
\author{DURHASAN TURGUT TOLLU\thanks{%
E-mail: dttollu@erbakan.edu.tr 
} \thanks{Conflict of interest: The author has no conflict of interest.} \\ 
Necmettin Erbakan University, Faculty of Science,\\
Department of Mathematics and Computer Sciences, Konya, Turkey}
\maketitle

\begin{abstract}
The paper deals with the following system of nonlinear difference equations 
\begin{equation*}
x_{n+1}=ax_{n}^{2}y_{n}+bx_{n}y_{n}^{2},\
y_{n+1}=cx_{n}^{2}y_{n}+dx_{n}y_{n}^{2},\ n\in \mathbb{N}_{0},
\end{equation*}%
where the initial values $x_{0},y_{0}$ and the parameters $a$, $b$, $c$, $d$
are arbitrary real numbers, which is a new class of solvable systems of
nonlinear difference equations. The general solution of the system is here
obtained in closed form via a practical method.

Keywords: Closed form formula, general solution, system of difference
equations, zero set, trivial solution.

AMS Classification (2010): 39A05, 39A10, 39A20.
\end{abstract}

\section{Introduction}

As in the theory of differential equations, a fundamental problem in the
theory of difference equations is the solvability of these equations or
their systems. If the considered equation or system can be solved, a
detailed analysis of behavior of the solutions can be obtained via its
general solution. Theoretically, the linear difference equations and systems
are always solvable. But, a nonlinear difference equation or a system of
nonlinear difference equations is not always solvable. Because finding the
solution of a nonlinear equation or system requires special methods. These
methods are generally to transform the equations or the system into linear
ones by suitable changes of variables. Accordingly, it is quite interesting
and valuable to find the general solution of a nonlinear difference equation
or system of difference equations.\newline
Although studies on the solvability of difference equations are based on
older times, we see some studies on the topic in the second half of the 20th
century. See, for example, \cite%
{Brand,Grove,Hussein1985,Hussein1986,Hussein1994,HusseinElfiky}. But, for
the last two decades, the topic has become more actual. For some published
recent studies on the solvability problem in difference equations, see, for
example \cite%
{Abo-Zeid,Akrour,Alayachi,Dehghan,El-Dessoky,Gumus,Haddad0,Haddad,KaraTJM,Kurbanli, McGrath,Rhouma,Rhouma1,HS,Stevic2004,Stevic-amc9223,Stevic2017,Taskara2020,T2,T3,Tollu,Yalc,YY6}
and the references in them. One can find some fundamental results in \cite%
{GroveBook,Levy}.\newline
In this paper, we define the following system of nonlinear difference
equations 
\begin{equation}
x_{n+1}=ax_{n}^{2}y_{n}+bx_{n}y_{n}^{2},\
y_{n+1}=cx_{n}^{2}y_{n}+dx_{n}y_{n}^{2},\ n\in \mathbb{N}_{0},  \label{sys}
\end{equation}%
where the initial values $x_{0}$, $y_{0}$ and the parameters $a$, $b$, $c$, $%
d$ are arbitrary real numbers. We obtain the closed form formulas of
nontrivial solutions of system (\ref{sys}) via a practical method. The
method was used for the first time in the reference \cite{Tollu2020} to
solve another system. The method in this study is a slight modification of
the first. See also \cite{TolluYal2021} for another solvable nonlinear
system and another modification.\newline
System (\ref{sys}) belongs to the following class of systems of difference
equations of order one 
\begin{equation}
x_{n+1}=f\left( x_{n},y_{n}\right) ,\ y_{n+1}=g\left( x_{n},y_{n}\right) ,\
n\in \mathbb{N}_{0}.  \label{sysg}
\end{equation}%
\newline
By the solvability of a system in form (\ref{sysg}), we would like to say
that there are sequences of the general terms 
\begin{equation}
\left\{ 
\begin{array}{c}
x_{kn+i}=\phi _{n}^{\left( i\right) }\left( x_{0},y_{0}\right) , \\ 
y_{ln+j}=\psi _{n}^{\left( j\right) }\left( x_{0},y_{0}\right) ,%
\end{array}%
\ 0\leq i\leq k-1,\ 0\leq j\leq l-1,\right.   \label{gsol}
\end{equation}%
that provide the system for $i,j,n\in \mathbb{N}_{0}$ and $k,l\in \mathbb{N}$%
. If there are $n_{0}\in \mathbb{N}_{0}$ such that the identities 
\begin{equation*}
\phi _{n}^{\left( i\right) }\left( x_{0},y_{0}\right) =0\text{ and }\psi
_{n}^{\left( j\right) }\left( x_{0},y_{0}\right) =0
\end{equation*}%
hold for every $n\geq n_{0}$, then we say that the solution represented by (%
\ref{gsol}) is eventually trivial. If $n_{0}=0$, then we say that the
solution is trivial. The set of initial values that produce such solutions
is called zero set of system (\ref{sysg}). See \cite{McGrath} for the zero
set definition of a difference equation.

\section{Some preliminary results}

This section serves us to achieve our main results.\newline
If we take $a=c$, $b=d$ and $x_{0}=y_{0}$ in (\ref{sys}), then we obtain
solutions of the following difference equation 
\begin{equation}
x_{n+1}=\left( a+b\right) x_{n}^{3}  \label{const}
\end{equation}%
from its solutions for all $n\in \mathbb{N}_{0}$. This equation is a special
case of the difference equation 
\begin{equation}
x_{n+1}=a_{n}x_{n}^{3},  \label{gpe}
\end{equation}%
where $\left( a_{n}\right) $ is a sequence with real-terms. If $a_{n}\neq 0$
for all $n\in \mathbb{N}_{0}$, then Equation (\ref{gpe}) has the solution 
\begin{equation*}
x_{n}=x_{0}^{3^{n}}\prod\limits_{k=0}^{n-1}a_{k}^{3^{n-k-1}}.
\end{equation*}%
Several equations in the form of (\ref{gpe}) naturally comes up in our
study. If we allow to become $a_{n}=a+b$ for every $n\in \mathbb{N}_{0}$,
then we have the following solution 
\begin{equation*}
x_{n}=x_{0}^{3^{n}}\prod\limits_{k=0}^{n-1}\left( a+b\right)
^{3^{n-k-1}}=x_{0}^{3^{n}}\left( a+b\right) ^{\left(
\sum\limits_{k=0}^{n-1}3^{n-k-1}\right) }=x_{0}^{3^{n}}\left( a+b\right) ^{%
\frac{3^{n}-1}{3-1}}
\end{equation*}%
which is solution of (\ref{const}). We here adopt the following conventions%
\begin{equation*}
\sum\limits_{k=m}^{m-l}s_{k}=0\text{ and }\prod\limits_{k=m}^{m-l}s_{k}=1,\
l\in \mathbb{N}.
\end{equation*}%
Hence, the above formulas are also valid for $n=0$.\newline
In this study, we use the general solutions of the two-dimensional system of
first-order linear difference equations with constant coefficients is given
by 
\begin{equation}
\left[ 
\begin{array}{c}
u_{n+1} \\ 
v_{n+1}%
\end{array}%
\right] =\left[ 
\begin{array}{cc}
a & b \\ 
c & d%
\end{array}%
\right] \left[ 
\begin{array}{c}
u_{n} \\ 
v_{n}%
\end{array}%
\right]  \label{veksys}
\end{equation}%
for all $n\in \mathbb{N}_{0}$. Let%
\begin{equation}
A=\left[ 
\begin{array}{cc}
a & b \\ 
c & d%
\end{array}%
\right] \text{.}  \label{A}
\end{equation}%
Then, to obtain the general solution of system (\ref{veksys}), we need to
find the $n$th power of the square matrix $A$. Therefore, we can write the
general solution of the system as the following 
\begin{equation}
\left[ 
\begin{array}{c}
u_{n} \\ 
v_{n}%
\end{array}%
\right] =\left[ 
\begin{array}{cc}
a & b \\ 
c & d%
\end{array}%
\right] ^{n}\left[ 
\begin{array}{c}
u_{0} \\ 
v_{0}%
\end{array}%
\right] .  \label{veksys1}
\end{equation}%
For some fundamental standard results on system (\ref{veksys}), see \cite%
{Kulenovic}. The following lemmas serve to solve system (\ref{veksys}).

\begin{lemma}
\label{lem01} Consider system (\ref{veksys}). If $ad-bc=0$, then the general
solution of system (\ref{veksys}) is given by 
\begin{equation}
\left[ 
\begin{array}{c}
u_{n} \\ 
v_{n}%
\end{array}%
\right] =\left( a+d\right) ^{n-1}\left[ 
\begin{array}{c}
au_{0}+bv_{0} \\ 
cu_{0}+dv_{0}%
\end{array}%
\right] .  \label{ad-bcsol}
\end{equation}
\end{lemma}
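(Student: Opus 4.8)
The plan is to reduce everything to computing the $n$th power of the coefficient matrix $A$ in (\ref{A}), since by (\ref{veksys1}) the general solution is simply $\left[ u_{n},v_{n}\right] ^{T}=A^{n}\left[ u_{0},v_{0}\right] ^{T}$. The key observation is that the Cayley--Hamilton theorem forces $A$ to satisfy its own characteristic equation $A^{2}-\left( a+d\right) A+\left( ad-bc\right) I=O$, where $I$ and $O$ denote the $2\times 2$ identity and zero matrices. Under the hypothesis $ad-bc=0$ this collapses to the single relation $A^{2}=\left( a+d\right) A$, which may equally well be verified by directly multiplying $A$ by itself and using $ad=bc$ in the off-diagonal entries. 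This quadratic identity is the engine of the whole argument.

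From $A^{2}=\left( a+d\right) A$ I would prove by induction on $n$ that $A^{n}=\left( a+d\right) ^{n-1}A$ for every $n\geq 1$. The base case $n=1$ is immediate, and in the inductive step one writes $A^{n+1}=A\cdot A^{n}=\left( a+d\right) ^{n-1}A^{2}=\left( a+d\right) ^{n-1}\left( a+d\right) A=\left( a+d\right) ^{n}A$, so no new difficulty arises. Substituting this expression into (\ref{veksys1}) yields
\begin{equation*}
\left[
\begin{array}{c}
u_{n} \\
v_{n}
\end{array}
\right] =A^{n}\left[
\begin{array}{c}
u_{0} \\
v_{0}
\end{array}
\right] =\left( a+d\right) ^{n-1}A\left[
\begin{array}{c}
u_{0} \\
v_{0}
\end{array}
\right] =\left( a+d\right) ^{n-1}\left[
\begin{array}{c}
au_{0}+bv_{0} \\
cu_{0}+dv_{0}
\end{array}
\right] ,
\end{equation*}
which is precisely (\ref{ad-bcsol}).

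The only point that needs care is the degenerate case $a+d=0$. Here $A$ is both singular and traceless, so the relation above gives $A^{2}=O$; that is, $A$ is nilpotent and $A^{n}=O$ for all $n\geq 2$. The closed form $\left( a+d\right) ^{n-1}A$ remains correct in this situation provided one reads $\left( a+d\right) ^{0}=0^{0}=1$ at $n=1$, consistent with the empty-product convention adopted earlier in the paper, after which every factor $\left( a+d\right) ^{n-1}$ vanishes for $n\geq 2$ and matches $A^{n}=O$. Note that the induction itself never invokes a problematic power, since each step only multiplies by one further factor of $\left( a+d\right) $; thus formula (\ref{ad-bcsol}) holds uniformly for all $n\geq 1$. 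I expect this uniform handling of the traceless case to be the only genuinely delicate point, the remainder being a short and routine induction.
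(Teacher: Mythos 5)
Your proof is correct and takes essentially the same route as the paper: both rest on the key identity $A^{2}=\left( a+d\right) A$ (which the paper verifies by direct multiplication rather than by Cayley--Hamilton) and then iterate it to get $A^{n}=\left( a+d\right) ^{n-1}A$ --- your induction is just a cleaner form of the paper's telescoping chain of equalities --- before substituting into (\ref{veksys1}). Your extra care with the traceless case $a+d=0$ is a point the paper passes over in silence, but it does not change the substance of the argument.
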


\begin{proof}
Suppose that $ad-bc=0$. Then, we first observe that $A^{2}=\left( a+d\right)
A$. By using this identity, we have 
\begin{equation*}
A^{n}=A^{2}A^{n-2}=\left( a+d\right) AA^{n-2}=\left( a+d\right) A^{n-1}
\end{equation*}%
for $n\geq 2$. Since the last equality is a first-order difference equation,
we can write 
\begin{eqnarray*}
A^{n} &=&\left( a+d\right) A^{n-1} \\
\left( a+d\right) A^{n-1} &=&\left( a+d\right) ^{2}A^{n-2} \\
\left( a+d\right) ^{2}A^{n-2} &=&\left( a+d\right) ^{3}A^{n-3} \\
&&\vdots \\
\left( a+d\right) ^{n-2}A^{2} &=&\left( a+d\right) ^{n-1}A.
\end{eqnarray*}%
By summing the above equalities, we find the $n$th power of $A$ as follows 
\begin{equation}
A^{n}=\left( a+d\right) ^{n-1}\left[ 
\begin{array}{cc}
a & b \\ 
c & d%
\end{array}%
\right] .  \label{ad-bc=0}
\end{equation}%
Finally, by using (\ref{ad-bc=0}) in (\ref{veksys1}), we have (\ref{ad-bcsol}%
).
\end{proof}

\begin{lemma}
\label{lem2} Consider system (\ref{veksys}). If $ad-bc\neq 0$, then the
following statements are true.\newline
(i) If $\left( a-d\right) ^{2}+4bc\neq 0$, then 
\begin{equation}
\left[ 
\begin{array}{c}
u_{n} \\ 
v_{n}%
\end{array}%
\right] =\left[ 
\begin{array}{c}
\frac{b}{\lambda _{1}-\lambda _{2}}\left( \frac{c}{\lambda _{1}-a}%
u_{0}+v_{0}\right) \lambda _{1}^{n}-\frac{b}{\lambda _{1}-\lambda _{2}}%
\left( \frac{c}{\lambda _{2}-a}u_{0}+v_{0}\right) \lambda _{2}^{n} \\ 
\frac{1}{\lambda _{1}-\lambda _{2}}\left( cu_{0}+\left( \lambda
_{1}-a\right) v_{0}\right) \lambda _{1}^{n}-\frac{1}{\lambda _{1}-\lambda
_{2}}\left( cu_{0}+\left( \lambda _{2}-a\right) v_{0}\right) \lambda _{2}^{n}%
\end{array}%
\right] ,  \label{unvng}
\end{equation}%
where $\lambda _{1}$ and $\lambda _{2}$ are eigenvalues of $A$.\newline
(ii) If $\left( a-d\right) ^{2}+4bc=0$, then 
\begin{equation}
\left[ 
\begin{array}{c}
u_{n} \\ 
v_{n}%
\end{array}%
\right] =\left( \frac{a+d}{2}\right) ^{n-1}\left[ 
\begin{array}{c}
\frac{a+d}{2}u_{0}+\left( \frac{a-d}{2}u_{0}+bv_{0}\right) n \\ 
\frac{a+d}{2}v_{0}+\left( cu_{0}+\frac{d-a}{2}v_{0}\right) n%
\end{array}%
\right] .  \label{veksyssol}
\end{equation}
\end{lemma}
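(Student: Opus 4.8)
The plan is to realize the general solution through (\ref{veksys1}) by computing $A^{n}$ explicitly, splitting into the two eigenvalue regimes. The common starting point is the characteristic polynomial of $A$, namely $\lambda ^{2}-\left( a+d\right) \lambda +\left( ad-bc\right) $, whose discriminant is exactly $\left( a+d\right) ^{2}-4\left( ad-bc\right) =\left( a-d\right) ^{2}+4bc$. Thus the hypothesis $\left( a-d\right) ^{2}+4bc\neq 0$ in part (i) is precisely the condition that $A$ has two distinct eigenvalues $\lambda _{1},\lambda _{2}$, while $\left( a-d\right) ^{2}+4bc=0$ in part (ii) forces a single repeated eigenvalue $\lambda =\left( a+d\right) /2$. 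In both parts I will use Vieta's relations $\lambda _{1}+\lambda _{2}=a+d$ and $\lambda _{1}\lambda _{2}=ad-bc$ (the latter nonzero here). Note also that in case (ii) one has $\lambda ^{2}=ad-bc\neq 0$, so $\lambda \neq 0$ and the factor $\lambda ^{n-1}$ is well defined.

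For part (i), I would diagonalize $A=PDP^{-1}$ with $D=\mathrm{diag}\left( \lambda _{1},\lambda _{2}\right) $ and $P=\left[ \begin{array}{cc} b & b \\ \lambda _{1}-a & \lambda _{2}-a \end{array}\right] $, whose columns $\left( b,\lambda _{i}-a\right) ^{T}$ are genuine eigenvectors since $\left( a-\lambda _{i}\right) b+b\left( \lambda _{i}-a\right) =0$. Here $\det P=-b\left( \lambda _{1}-\lambda _{2}\right) \neq 0$, so $A^{n}=PD^{n}P^{-1}$, and substituting into (\ref{veksys1}) gives $[u_{n},v_{n}]^{T}=PD^{n}P^{-1}[u_{0},v_{0}]^{T}$. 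Carrying out the three products expresses each component as a combination of $\lambda _{1}^{n}$ and $\lambda _{2}^{n}$ with coefficients linear in $u_{0},v_{0}$.

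The one point requiring care is reconciling this raw output, whose coefficients naturally involve $\lambda _{i}-a$ and $b$, with the stated form (\ref{unvng}), where the factors $c/\left( \lambda _{i}-a\right) $ appear instead. The bridge is the Vieta identity $\left( \lambda _{1}-a\right) \left( \lambda _{2}-a\right) =\lambda _{1}\lambda _{2}-a\left( \lambda _{1}+\lambda _{2}\right) +a^{2}=\left( ad-bc\right) -a\left( a+d\right) +a^{2}=-bc$, equivalently $\left( \lambda _{1}-a\right) \left( a-\lambda _{2}\right) =bc$. Using it to replace $a-\lambda _{2}$ by $bc/\left( \lambda _{1}-a\right) $ in the $u_{n}$-component, and to cancel the factor $b$ in the $v_{n}$-component, converts my expressions into exactly (\ref{unvng}). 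This algebraic bookkeeping, though elementary, is the main obstacle; I would also remark that the written form tacitly presupposes $\lambda _{i}\neq a$ (i.e. $bc\neq 0$), the triangular degenerate subcases being read off directly from $A^{n}=PD^{n}P^{-1}$.

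For part (ii), diagonalization is unavailable, so I would instead write $A=\lambda I+N$ with $\lambda =\left( a+d\right) /2$ and $N=A-\lambda I=\left[ \begin{array}{cc} \left( a-d\right) /2 & b \\ c & \left( d-a\right) /2 \end{array}\right] $. A direct computation shows every diagonal entry of $N^{2}$ equals $\left( a-d\right) ^{2}/4+bc=\tfrac{1}{4}\left[ \left( a-d\right) ^{2}+4bc\right] =0$ and the off-diagonal entries vanish likewise, so $N$ is nilpotent with $N^{2}=0$. Since $\lambda I$ and $N$ commute, the binomial expansion collapses to $A^{n}=\left( \lambda I+N\right) ^{n}=\lambda ^{n}I+n\lambda ^{n-1}N=\lambda ^{n-1}\left( \lambda I+nN\right) $. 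Substituting the entries of $N$ and applying this matrix to $[u_{0},v_{0}]^{T}$ via (\ref{veksys1}) yields (\ref{veksyssol}) at once, and the convention $A^{0}=I$ together with the remark following (\ref{gpe}) makes the formula valid also at $n=0$.
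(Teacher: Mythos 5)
Your proposal is correct, but it reaches $A^{n}$ by a genuinely different route than the paper. The paper's proof carries out no decomposition at all: it remarks that ``one can use any method,'' invokes the discrete analogue of the Putzer algorithm (citing Elaydi, p.~118) to write down $A^{n}$ in the two regimes as formula (\ref{An}), and then substitutes into (\ref{veksys1}). You instead derive $A^{n}$ from scratch: in case (i) by explicit diagonalization $A=PDP^{-1}$ with eigenvectors $\left( b,\lambda _{i}-a\right) ^{T}$, and in case (ii) by the splitting $A=\lambda I+N$, $\lambda =\frac{a+d}{2}$, where your check that $N^{2}=0$ is exactly the hypothesis $\left( a-d\right) ^{2}+4bc=0$, so that $A^{n}=\lambda ^{n-1}\left( \lambda I+nN\right) $; both computations are sound, including the observation that $\lambda \neq 0$ because $\lambda ^{2}=ad-bc\neq 0$. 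What your route buys: it is self-contained, and it makes explicit the Vieta identity $\left( \lambda _{1}-a\right) \left( \lambda _{2}-a\right) =-bc$ needed to pass from the raw entries of the matrix power (which is what (\ref{An}) records) to the form (\ref{unvng}) stated in the lemma --- a conversion the paper performs silently. You also correctly flag that (\ref{unvng}) tacitly presupposes $bc\neq 0$, i.e. $\lambda _{i}\neq a$, a degeneracy the paper never mentions even though its own formula (\ref{An}) remains valid when $bc=0$; the one small slip in your write-up is that in that degenerate case ($b=0$) your matrix $P$ is singular, so the triangular subcase must be computed directly rather than ``read off from $PD^{n}P^{-1}$'' --- but since (\ref{unvng}) itself is ill-defined there, this is a defect of the statement, not of your argument. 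What the paper's route buys is brevity: citing a standard algorithm avoids both the diagonalization and the algebraic reconciliation.
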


\begin{proof}
Consider the square matrix $A$ given in (\ref{A}). Suppose that $ad-bc\neq 0$%
. Then, to calculate $A^{n}$, one can use any method. For example, the
discrete analogue of the Putzer algorithm (See \cite{Elaydi} p. 118). So, we
give $A^{n}$ by the following formulas 
\begin{equation}
A^{n}=\left\{ 
\begin{array}{cc}
\left[ 
\begin{array}{cc}
\frac{\left( a-\lambda _{2}\right) \lambda _{1}^{n}-\left( a-\lambda
_{1}\right) \lambda _{2}^{n}}{\lambda _{1}-\lambda _{2}} & b\frac{\lambda
_{1}^{n}-\lambda _{2}^{n}}{\lambda _{1}-\lambda _{2}} \\ 
c\frac{\lambda _{1}^{n}-\lambda _{2}^{n}}{\lambda _{1}-\lambda _{2}} & \frac{%
\left( d-\lambda _{2}\right) \lambda _{1}^{n}-\left( d-\lambda _{1}\right)
\lambda _{2}^{n}}{\lambda _{1}-\lambda _{2}}%
\end{array}%
\right] , & \text{if }\left( a-d\right) ^{2}+4bc\neq 0, \\ 
\left( \frac{a+d}{2}\right) ^{n-1}\left[ 
\begin{array}{cc}
\frac{a+d}{2}+n\frac{a-d}{2} & bn \\ 
cn & \frac{a+d}{2}+n\frac{d-a}{2}%
\end{array}%
\right] , & \text{if }\left( a-d\right) ^{2}+4bc=0,%
\end{array}%
\right.  \label{An}
\end{equation}%
\newline
where $\lambda _{1}$ and $\lambda _{2}$ are eigenvalues of $A$. By using (%
\ref{An}) in (\ref{veksys1}), we have (\ref{unvng}) and (\ref{veksyssol}).
\end{proof}

\begin{remark}
If $\left( a-d\right) ^{2}+4bc\neq 0$ and $a+d=0$, then there exists the
relation%
\begin{equation}
\lambda _{1}=\sqrt{a^{2}+bc}=-\lambda _{2}.  \label{L=-L}
\end{equation}%
By using (\ref{L=-L}) in (\ref{An}), we have 
\begin{equation}
A^{2n}=\left[ 
\begin{array}{cc}
\left( \sqrt{a^{2}+bc}\right) ^{2n} & 0 \\ 
0 & \left( \sqrt{a^{2}+bc}\right) ^{2n}%
\end{array}%
\right]  \label{A2n}
\end{equation}%
and 
\begin{equation}
A^{2n+1}=\left[ 
\begin{array}{cc}
a\left( \sqrt{a^{2}+bc}\right) ^{2n} & b\left( \sqrt{a^{2}+bc}\right) ^{2n}
\\ 
c\left( \sqrt{a^{2}+bc}\right) ^{2n} & d\left( \sqrt{a^{2}+bc}\right) ^{2n}%
\end{array}%
\right] .  \label{A2n+1}
\end{equation}%
Hence, by using (\ref{A2n})-(\ref{A2n+1}) in (\ref{veksys1}), we have the
general solution of system (\ref{veksys}) as follows 
\begin{equation*}
\left[ 
\begin{array}{c}
u_{2n} \\ 
v_{2n}%
\end{array}%
\right] =\left[ 
\begin{array}{c}
\left( \sqrt{a^{2}+bc}\right) ^{2n}u_{0} \\ 
\left( \sqrt{a^{2}+bc}\right) ^{2n}v_{0}%
\end{array}%
\right]
\end{equation*}%
and 
\begin{equation*}
\left[ 
\begin{array}{c}
u_{2n+1} \\ 
v_{2n+1}%
\end{array}%
\right] =\left[ 
\begin{array}{c}
\left( \sqrt{a^{2}+bc}\right) ^{2n}au_{0}+\left( \sqrt{a^{2}+bc}\right)
^{2n}bv_{0} \\ 
\left( \sqrt{a^{2}+bc}\right) ^{2n}cu_{0}+\left( \sqrt{a^{2}+bc}\right)
^{2n}dv_{0}%
\end{array}%
\right] .
\end{equation*}
\end{remark}

\section{Solvability and nontrivial solutions of system (\protect\ref{sys})}

This section contains our main results. We apply a quite practical method to
find the general solution in closed form of system (\ref{sys}).\newline
In the section introduction, the parameters and the initial values of system
(\ref{sys}) have been defined as arbitrary real numbers. These initial
values cause some trivial cases. It is easily seen from (\ref{sys}) that if $%
a=b=0$, then we have the trivial case $x_{n+1}=0$ , and so$\ y_{n+2}=0$ for
all real initial values and every $n\in \mathbb{N}_{0}$. Similarly, if $%
c=d=0 $, then we have the trivial case $y_{n+1}=0$ , and so$\ x_{n+2}=0$ for
all real initial values and every $n\in \mathbb{N}_{0}$.To avoid these
trivial cases, we suppose from now on that $\left( \left\vert a\right\vert
+\left\vert b\right\vert \right) \left( \left\vert c\right\vert +\left\vert
d\right\vert \right) \neq 0$. If $x_{0}=0$ or $y_{0}=0$, then we obtain the
trivial solution $x_{n}=y_{n}=0$ for every $n\in \mathbb{N}_{0}$. But,
choosing non-zero initial values does not guarantee a non-trivial solution.
That is, apart from these cases, there are still initial values that
produces an eventually trivial solution. The following lemma enable us to
determine such solutions of (\ref{sys}).

\begin{lemma}
Let $\left( \left\vert a\right\vert +\left\vert b\right\vert \right) \left(
\left\vert c\right\vert +\left\vert d\right\vert \right) \neq 0$ in system (%
\ref{sys}). Then the following statements are true.\newline
(i) If $ad-bc=0$, then the zero set of (\ref{sys}) is 
\begin{equation*}
Z_{0}=\left\{ \left( x_{0},y_{0}\right) :%
\begin{array}{c}
x_{0}y_{0}=0\text{ or }ax_{0}+by_{0}=0%
\end{array}%
\right\} .
\end{equation*}%
(ii) If $ad-bc\neq 0$ and $\left( a-d\right) ^{2}+4bc\neq 0$, then the zero
set of (\ref{sys}) is 
\begin{equation*}
Z_{1}=\left\{ \left( x_{0},y_{0}\right) :%
\begin{array}{c}
\left( \left( a-\lambda _{2}\right) \lambda _{1}^{n}-\left( a-\lambda
_{1}\right) \lambda _{2}^{n}\right) x_{0}+b\left( \lambda _{1}^{n}-\lambda
_{2}^{n}\right) y_{0}=0 \\ 
\text{ or }c\left( \lambda _{1}^{n}-\lambda _{2}^{n}\right) x_{0}+\left(
\left( d-\lambda _{2}\right) \lambda _{1}^{n}-\left( d-\lambda _{1}\right)
\lambda _{2}^{n}\right) y_{0}=0%
\end{array}%
,\ n\in \mathbb{N}_{0}\right\} .
\end{equation*}%
(iii) If $ad-bc\neq 0$ and $\left( a-d\right) ^{2}+4bc=0$, then the zero set
of (\ref{sys}) is 
\begin{equation*}
Z_{2}=\left\{ \left( x_{0},y_{0}\right) :%
\begin{array}{c}
\left( \frac{a+d}{2}+n\frac{a-d}{2}\right) x_{0}+bny_{0}=0 \\ 
\text{ or }cnx_{0}+\left( \frac{a+d}{2}+n\frac{d-a}{2}\right) y_{0}=0%
\end{array}%
,\ n\in \mathbb{N}_{0}\right\} .
\end{equation*}%
(iv) If $ad-bc\neq 0$, $\left( a-d\right) ^{2}+4bc\neq 0$ and $a+d=0$, then
the zero set of (\ref{sys}) is 
\begin{equation*}
Z_{3}=\left\{ \left( x_{0},y_{0}\right) :x_{0}y_{0}=0\text{ or }\left(
ax_{0}+by_{0}\right) =0\text{ or }\left( cx_{0}+dy_{0}\right) =0\right\} .
\end{equation*}
\end{lemma}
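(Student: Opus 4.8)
The plan is to reduce the nonlinear recursion to the linear machinery already developed for $A$. First I would factor the right-hand sides as
\begin{equation*}
\begin{bmatrix} x_{n+1} \\ y_{n+1} \end{bmatrix} = x_n y_n \begin{bmatrix} a x_n + b y_n \\ c x_n + d y_n \end{bmatrix} = x_n y_n \, A \begin{bmatrix} x_n \\ y_n \end{bmatrix},
\end{equation*}
which exhibits $A$ as the linear part acting on the direction of $\left(x_n,y_n\right)$. I would then prove by induction that, as long as the solution has not yet vanished,
\begin{equation*}
\begin{bmatrix} x_n \\ y_n \end{bmatrix} = \mu_n A^n \begin{bmatrix} x_0 \\ y_0 \end{bmatrix}, \qquad \mu_n = \prod_{k=0}^{n-1} x_k y_k,
\end{equation*}
the base case $n=0$ being the empty-product convention $\mu_0 = 1$ adopted in the preliminary section, and the inductive step following from the factorization together with $\mu_{n+1} = \mu_n x_n y_n$.

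Next I would characterize eventual triviality. From the factorization, $\left(x_{n+1},y_{n+1}\right)=\left(0,0\right)$ exactly when $x_n y_n = 0$ or $A\left(x_n,y_n\right)^{T}=0$. Letting $m$ be the first index at which this occurs, all earlier products are nonzero, so $\mu_m \neq 0$ and the displayed representation is valid up to step $m$. Cancelling the nonzero scalar $\mu_m$, the solution is eventually trivial if and only if, for some $m\in\mathbb{N}_0$, a coordinate of $A^m\left(x_0,y_0\right)^{T}$ vanishes or $A^{m+1}\left(x_0,y_0\right)^{T}=0$. In the nonsingular cases the second alternative forces $\left(x_0,y_0\right)=\left(0,0\right)$ and is absorbed into the first, so the zero set is precisely the set of initial data for which some coordinate of $A^m\left(x_0,y_0\right)^{T}$ is $0$.

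Finally I would substitute the explicit powers of $A$. The first branch of (\ref{An}) gives the two coordinate equations of $Z_1$ directly (case (ii)); the second branch, after dividing by the factor $\left(\tfrac{a+d}{2}\right)^{n-1}$, nonzero because $\det A\neq 0$ rules out a zero eigenvalue, yields $Z_2$ (case (iii)); and the Remark's reduced forms $A^{2n}=\left(a^2+bc\right)^{n}I$ and $A^{2n+1}=\left(a^2+bc\right)^{n}A$ give $x_0 y_0 = 0$ from the even powers and the two linear conditions from the odd powers, i.e. $Z_3$ (case (iv)), using $a^2+bc\neq 0$ which holds since $\det A = -a^2-bc\neq 0$ when $a+d=0$. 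For case (i) I would instead invoke Lemma \ref{lem01}, i.e. $A^m=\left(a+d\right)^{m-1}A$ for $m\ge 1$: here the hypothesis $\left(\left\vert a\right\vert+\left\vert b\right\vert\right)\left(\left\vert c\right\vert+\left\vert d\right\vert\right)\neq 0$ makes both rows of $A$ nonzero, while $\det A=0$ makes them proportional, so $ax_0+by_0=0 \Leftrightarrow cx_0+dy_0=0$ and the null-space condition $A^{m+1}\left(x_0,y_0\right)^{T}=0$ collapses to the single line $ax_0+by_0=0$, producing $Z_0$.

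The step I expect to be the main obstacle is the singular case (i): one must verify that vanishing of a coordinate of $\left(x_n,y_n\right)$ really is equivalent to vanishing of the corresponding coordinate of $A^n\left(x_0,y_0\right)^{T}$, which needs $\mu_n\neq 0$ up to the first termination, and one must correctly fold the null-space route $A\left(x_n,y_n\right)^{T}=0$ into the condition $ax_0+by_0=0$ through the rank-one structure of $A$. The nonsingular cases are then routine substitutions once the common characterization of the zero set is in place.
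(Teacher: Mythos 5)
Your proposal is correct and is essentially the paper's own proof: your representation $\left(x_{n},y_{n}\right)^{T}=\mu_{n}A^{n}\left(x_{0},y_{0}\right)^{T}$ is exactly the paper's change of variables (\ref{cv}) (there written $u_{n}=x_{n}/\mu_{n}$, $v_{n}=y_{n}/\mu_{n}$ and solved through (\ref{veksys1})), and your case analysis substitutes the same explicit powers of $A$ coming from Lemma \ref{lem01}, Lemma \ref{lem2} (formula (\ref{An})) and the Remark. If anything, you are more careful than the paper about the minimal vanishing index and the converse inclusion; the only caveat is that both you and the paper pass silently over the degenerate sub-case $ad-bc=0$, $a+d=0$ (e.g. $a=b=1$, $c=d=-1$), where $A^{2}=0$ forces every initial point to give an eventually trivial solution so that the stated $Z_{0}$ is too small --- but that is a defect of the lemma as stated, not of your argument relative to the paper's.
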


\begin{proof}
The cases when $x_{0}=0$ or $y_{0}=0$ were explained above. Let $%
x_{0}y_{0}\neq 0$. Then, we can assume that the first $n+1$ terms of both
the sequences $\left( x_{n}\right) $ and $\left( y_{n}\right) $ are nonzero.
That is, let us assume that 
\begin{equation}
\prod\limits_{k=0}^{n}x_{k}y_{k}\neq 0.  \label{prod}
\end{equation}%
Dividing both equations in (\ref{sys}) by (\ref{prod}), we obtain 
\begin{eqnarray}
\frac{x_{n+1}}{\prod\limits_{k=0}^{n}x_{k}y_{k}} &=&a\frac{x_{n}}{%
\prod\limits_{k=0}^{n-1}x_{k}y_{k}}+b\frac{y_{n}}{\prod%
\limits_{k=0}^{n-1}x_{k}y_{k}},  \label{psx} \\
\frac{y_{n+1}}{\prod\limits_{k=0}^{n}x_{k}y_{k}} &=&c\frac{x_{n}}{%
\prod\limits_{k=0}^{n-1}x_{k}y_{k}}+d\frac{y_{n}}{\prod%
\limits_{k=0}^{n-1}x_{k}y_{k}}  \label{psy}
\end{eqnarray}%
for all $n\in \mathbb{N}_{0}$. Note that the equations in (\ref{psx})-(\ref%
{psy}) constitute a linear system with respect to the variables $u_{n}$ and $%
v_{n}$ defined by 
\begin{equation}
u_{n}=\frac{x_{n}}{\prod\limits_{k=0}^{n-1}x_{k}y_{k}}\text{ and }v_{n}=%
\frac{y_{n}}{\prod\limits_{k=0}^{n-1}x_{k}y_{k}}.  \label{cv}
\end{equation}%
In fact, this linear system is system given by (\ref{veksys}). Hence, we can
exploit (\ref{veksys}) and (\ref{cv}) to solve (\ref{sys}). From (\ref{cv}),
we have 
\begin{eqnarray}
x_{n} &=&u_{n}\prod\limits_{k=0}^{n-1}x_{k}y_{k},  \label{cv1} \\
y_{n} &=&v_{n}\prod\limits_{k=0}^{n-1}x_{k}y_{k}.  \label{cv2}
\end{eqnarray}%
It is easy to see from (\ref{cv1}) and (\ref{cv2}) that we must choose $%
u_{n}=0$ or $v_{n}=0$ to obtain an eventually trivial solution of (\ref{sys}%
). That is, if $u_{n}=0$, then $x_{n}=0$. So, from (\ref{sys}), it follows
that $y_{n+1}=0$. Similarly, if $v_{n}=0$, then $y_{n}=0$. So, from (\ref%
{sys}), it follows that $x_{n+1}=0$.\newline
(i) It is clearly seen from (\ref{ad-bcsol}) that when $ad-bc=0$, if $%
u_{1}=au_{0}+bv_{0}=0$ (or equivalently $v_{1}=\frac{c}{a}\left(
au_{0}+bv_{0}\right) =0$), then $u_{n+1}=v_{n+1}=0$ for every $n\in \mathbb{N%
}_{0}$. \newline
(ii) If $ad-bc\neq 0$ and $\left( a-d\right) ^{2}+4bc\neq 0$, then, by using
(\ref{An}) in (\ref{veksys1}), we have 
\begin{equation}
\begin{array}{c}
u_{n}=\frac{\left( a-\lambda _{2}\right) \lambda _{1}^{n}-\left( a-\lambda
_{1}\right) \lambda _{2}^{n}}{\lambda _{1}-\lambda _{2}}u_{0}+b\frac{\lambda
_{1}^{n}-\lambda _{2}^{n}}{\lambda _{1}-\lambda _{2}}v_{0} \\ 
v_{n}=c\frac{\lambda _{1}^{n}-\lambda _{2}^{n}}{\lambda _{1}-\lambda _{2}}%
u_{0}+\frac{\left( d-\lambda _{2}\right) \lambda _{1}^{n}-\left( d-\lambda
_{1}\right) \lambda _{2}^{n}}{\lambda _{1}-\lambda _{2}}v_{0}%
\end{array}%
.  \label{LfL}
\end{equation}%
From (\ref{cv1}) and (\ref{LfL}), it follows that if 
\begin{equation}
\left( \left( a-\lambda _{2}\right) \lambda _{1}^{n}-\left( a-\lambda
_{1}\right) \lambda _{2}^{n}\right) u_{0}+b\left( \lambda _{1}^{n}-\lambda
_{2}^{n}\right) v_{0}=0,  \label{z1}
\end{equation}%
then $u_{n}=x_{n}=0$ for every $n\in \mathbb{N}_{0}$. By employing this
result in (\ref{sys}), we have $y_{n+1}=0$ for every $n\in \mathbb{N}_{0}$.
Similarly, from (\ref{cv2}) and (\ref{LfL}), it follows that if 
\begin{equation}
c\left( \lambda _{1}^{n}-\lambda _{2}^{n}\right) u_{0}+\left( \left(
d-\lambda _{2}\right) \lambda _{1}^{n}-\left( d-\lambda _{1}\right) \lambda
_{2}^{n}\right) v_{0}=0,  \label{z2}
\end{equation}%
then $v_{n}=y_{n}=0$ for every $n\in \mathbb{N}_{0}$. By employing this
result in (\ref{sys}) we have $x_{n+1}=0$ for every $n\in \mathbb{N}_{0}$.%
\newline
(iii) If $ad-bc\neq 0$ and $\left( a-d\right) ^{2}+4bc=0$, then we have 
\begin{equation}
\begin{array}{c}
u_{n}=\left( \frac{a+d}{2}\right) ^{n-1}\left[ \left( \frac{a+d}{2}+n\frac{%
a-d}{2}\right) u_{0}+bnv_{0}\right] , \\ 
v_{n}=\left( \frac{a+d}{2}\right) ^{n-1}\left[ cnu_{0}+\left( \frac{a+d}{2}+n%
\frac{d-a}{2}\right) v_{0}\right]%
\end{array}
\label{L=L}
\end{equation}%
by using (\ref{An}) in (\ref{veksys1}). From (\ref{cv1}) and (\ref{L=L}), it
follows that if 
\begin{equation}
\left( \frac{a+d}{2}+n\frac{a-d}{2}\right) u_{0}+bnv_{0}=0,  \label{z3}
\end{equation}%
then $u_{n}=x_{n}=0$ for every $n\in \mathbb{N}_{0}$. By employing this
result in (\ref{sys}), we have $y_{n+1}=0$ for every $n\in \mathbb{N}_{0}$.
Similarly, from (\ref{cv1}) and (\ref{L=L}), it follows that if 
\begin{equation}
cnu_{0}+\left( \frac{a+d}{2}+n\frac{d-a}{2}\right) v_{0}=0,  \label{z4}
\end{equation}%
then $v_{n}=y_{n}=0$ for every $n\in \mathbb{N}_{0}$. By employing this
result in (\ref{cv1}), we have $x_{n+1}=0$ for every $n\in \mathbb{N}_{0}$.%
\newline
(iv) If $a+d=0$, then the relation in (\ref{L=-L}) is satified. Using this
relation in (\ref{z1}) and (\ref{z2}), one can obtain desired result. 
\newline
Also, from (\ref{cv1}) and (\ref{cv2}), it follows for $n=0$ that 
\begin{equation}
u_{0}=x_{0}\text{ and }v_{0}=y_{0}.  \label{u=x}
\end{equation}%
This means that we can put $x_{0}$ in place of $u_{0}$ and $y_{0}$ in place
of $v_{0}$ in (\ref{z1}), (\ref{z2}), (\ref{z3}) and (\ref{z4}). This
completes the proof.
\end{proof}

We are now ready to find the general solution of system (\ref{sys}). Here,
we will use the formulas of $u_{n}$ and $v_{n}$ with $x_{0}$ and $y_{0}$
instead of $u_{0}$ and $v_{0}$ due to (\ref{u=x}). There are two cases of
the matrix $A$ to be considered as the following.\newline
The case $ad-bc=0$: In this case the first row in the matrix $A$ is linearly
dependent on the second one. Without loss of generality we may assume that
there exists the relation 
\begin{equation}
\left( c,d\right) =\frac{c}{a}\left( a,b\right) =\frac{d}{b}\left(
a,b\right) .  \label{rr}
\end{equation}%
By using (\ref{rr}) in system (\ref{veksys}), we have the relation%
\begin{equation}
v_{n+1}=\frac{c}{a}\left( au_{n}+bv_{n}\right) =\frac{c}{a}u_{n+1}
\label{rel}
\end{equation}%
for all $n\in \mathbb{N}_{0}$. Also, from the changes of variables in (\ref%
{cv}), we have the relation%
\begin{equation}
\frac{v_{n}}{u_{n}}=\frac{y_{n}}{x_{n}}  \label{vn/un}
\end{equation}%
for all $n\in \mathbb{N}_{0}$. (\ref{rel}) and (\ref{vn/un}) imply that 
\begin{equation}
\frac{y_{n+1}}{x_{n+1}}=\frac{c}{a}.  \label{y/x}
\end{equation}%
By employing (\ref{y/x}) in the first equation of (\ref{sys}), we have the
following equation 
\begin{equation}
x_{n+1}=\left( c+b\left( \frac{c}{a}\right) ^{2}\right) x_{n}^{3},
\label{xuv}
\end{equation}%
for all $n\in \mathbb{N}$. Hence, from (\ref{xuv}), we obtain%
\begin{equation}
x_{n}=x_{1}^{3^{n-1}}\left( c+b\left( \frac{c}{a}\right) ^{2}\right) ^{\frac{%
3^{n-1}-1}{2}}=\left( ax_{0}^{2}y_{0}+bx_{0}y_{0}^{2}\right)
^{3^{n-1}}\left( c+b\left( \frac{c}{a}\right) ^{2}\right) ^{\frac{3^{n-1}-1}{%
2}}.  \label{h1}
\end{equation}%
Also, by using (\ref{y/x}) and (\ref{h1}), we have 
\begin{equation}
y_{n}=\frac{c}{a}\left( ax_{0}^{2}y_{0}+bx_{0}y_{0}^{2}\right)
^{3^{n-1}}\left( c+b\left( \frac{c}{a}\right) ^{2}\right) ^{\frac{3^{n-1}-1}{%
2}}  \label{h2}
\end{equation}%
for $n\in \mathbb{N}$. Consequently, in the case $ad-bc=0$, the general
solution of (\ref{sys}) is given by the formulas in (\ref{h1})-(\ref{h2})
for all $n\in \mathbb{N}$.\newline
The case $ad-bc\neq 0$: In this case rows in the matrix $A$ are linearly
independent of each other. By employing (\ref{vn/un}) in the first equation
of (\ref{sys}) we have 
\begin{equation}
x_{n+1}=\left( a\frac{v_{n}}{u_{n}}+b\left( \frac{v_{n}}{u_{n}}\right)
^{2}\right) x_{n}^{3}  \label{xn+1}
\end{equation}%
for all $n\in \mathbb{N}_{0}$. Therefore, from (\ref{xn+1}), we have 
\begin{equation}
x_{n}=x_{0}^{3^{n}}\prod\limits_{k=0}^{n-1}\left( a\frac{v_{k}}{u_{k}}%
+b\left( \frac{v_{k}}{u_{k}}\right) ^{2}\right) ^{3^{n-k-1}}  \label{x1}
\end{equation}%
for all $n\in \mathbb{N}$. \newline
(i) If $\left( a-d\right) ^{2}+4bc\neq 0$, then, from (\ref{unvng}), we have
the ratio%
\begin{equation}
\frac{v_{n}}{u_{n}}=\frac{C_{3}\lambda _{1}^{n}-C_{4}\lambda _{2}^{n}}{%
C_{1}\lambda _{1}^{n}-C_{2}\lambda _{2}^{n}},  \label{v/ug}
\end{equation}%
where%
\begin{eqnarray}
C_{1} &=&\frac{b}{\lambda _{1}-\lambda _{2}}\left( \frac{c}{\lambda _{1}-a}%
x_{0}+y_{0}\right) ,  \label{c1} \\
C_{2} &=&\frac{b}{\lambda _{1}-\lambda _{2}}\left( \frac{c}{\lambda _{2}-a}%
x_{0}+y_{0}\right) ,  \label{c2} \\
C_{3} &=&\frac{1}{\lambda _{1}-\lambda _{2}}\left( cx_{0}+\left( \lambda
_{1}-a\right) y_{0}\right) ,  \label{c3} \\
C_{4} &=&\frac{1}{\lambda _{1}-\lambda _{2}}\left( cx_{0}+\left( \lambda
_{2}-a\right) y_{0}\right) ,  \label{c4}
\end{eqnarray}%
for all $n\in \mathbb{N}_{0}$. By employing (\ref{v/ug}) in (\ref{x1}), we
have the following formula%
\begin{equation}
x_{n}=x_{0}^{3^{n}}\prod\limits_{k=0}^{n-1}\left( a\frac{C_{3}\lambda
_{1}^{k}-C_{4}\lambda _{2}^{k}}{C_{1}\lambda _{1}^{k}-C_{2}\lambda _{2}^{k}}%
+b\left( \frac{C_{3}\lambda _{1}^{k}-C_{4}\lambda _{2}^{k}}{C_{1}\lambda
_{1}^{k}-C_{2}\lambda _{2}^{k}}\right) ^{2}\right) ^{3^{n-k-1}},  \label{xn}
\end{equation}%
for all $n\in \mathbb{N}_{0}$. By taking into account (\ref{vn/un}) and (\ref%
{xn}), we may write the formula for $y_{n}$ as the following 
\begin{equation}
y_{n}=\frac{C_{3}\lambda _{1}^{n}-C_{4}\lambda _{2}^{n}}{C_{1}\lambda
_{1}^{n}-C_{2}\lambda _{2}^{n}}x_{0}^{3^{n}}\prod\limits_{k=0}^{n-1}\left( a%
\frac{C_{3}\lambda _{1}^{k}-C_{4}\lambda _{2}^{k}}{C_{1}\lambda
_{1}^{k}-C_{2}\lambda _{2}^{k}}+b\left( \frac{C_{3}\lambda
_{1}^{k}-C_{4}\lambda _{2}^{k}}{C_{1}\lambda _{1}^{k}-C_{2}\lambda _{2}^{k}}%
\right) ^{2}\right) ^{3^{n-k-1}},  \label{yn}
\end{equation}%
where $C_{1}$, $C_{2}$, $C_{3}$ and $C_{4}$ are given in (\ref{c1})-(\ref{c4}%
), for all $n\in \mathbb{N}_{0}$. \newline
(ii) If $\left( a-d\right) ^{2}+4bc=0$, then, from (\ref{veksyssol}), we
have the ratio 
\begin{equation}
\frac{v_{n}}{u_{n}}=\frac{c_{3}+c_{4}n}{c_{1}+c_{2}n},  \label{v/u}
\end{equation}%
where 
\begin{eqnarray}
c_{1} &=&\frac{a+d}{2}x_{0},  \label{C1} \\
c_{2} &=&\frac{a-d}{2}x_{0}+by_{0},  \label{C2} \\
c_{3} &=&\frac{a+d}{2}y_{0},  \label{C3} \\
c_{4} &=&cx_{0}+\frac{d-a}{2}y_{0}.  \label{C4}
\end{eqnarray}%
By employing (\ref{v/u}) in (\ref{x1}) we have 
\begin{equation}
x_{n}=x_{0}^{3^{n}}\prod\limits_{k=0}^{n-1}\left( a\frac{c_{3}+c_{4}k}{%
c_{1}+c_{2}k}+b\left( \frac{c_{3}+c_{4}k}{c_{1}+c_{2}k}\right) ^{2}\right)
^{3^{n-k-1}},  \label{xnn}
\end{equation}%
where $c_{1}$, $c_{2}$, $c_{3}$ and $c_{4}$ are given in (\ref{C1})-(\ref{C4}%
), for all $n\in \mathbb{N}_{0}$. By taking into account (\ref{vn/un}) and (%
\ref{xnn}), we may write the formula 
\begin{equation}
y_{n}=\frac{c_{3}+c_{4}n}{c_{1}+c_{2}n}x_{0}^{3^{n}}\prod\limits_{k=0}^{n-1}%
\left( a\frac{c_{3}+c_{4}k}{c_{1}+c_{2}k}+b\left( \frac{c_{3}+c_{4}k}{%
c_{1}+c_{2}k}\right) ^{2}\right) ^{3^{n-k-1}},  \label{ynn}
\end{equation}%
where $c_{1}$, $c_{2}$, $c_{3}$ and $c_{4}$ are given in (\ref{C1}) and (\ref%
{C4}), for all $n\in \mathbb{N}_{0}$. \newline
(iii) If $\left( a-d\right) ^{2}+4bc\neq 0$ and $a+d=0$, then $\lambda _{1}=%
\sqrt{a^{2}+bc}=-\lambda _{2}$. By using this equality in (\ref{v/ug}), we
have 
\begin{equation*}
\frac{v_{n}}{u_{n}}=\frac{C_{3}\left( \sqrt{a^{2}+bc}\right)
^{n}-C_{4}\left( -\sqrt{a^{2}+bc}\right) ^{n}}{C_{1}\left( \sqrt{a^{2}+bc}%
\right) ^{n}-C_{2}\left( -\sqrt{a^{2}+bc}\right) ^{n}}=\frac{%
C_{3}-C_{4}\left( -1\right) ^{n}}{C_{1}-C_{2}\left( -1\right) ^{n}}
\end{equation*}%
and so%
\begin{eqnarray}
\frac{v_{2n}}{u_{2n}} &=&\frac{C_{3}-C_{4}}{C_{1}-C_{2}}=\frac{y_{0}}{x_{0}},
\label{y/x0} \\
\frac{v_{2n+1}}{u_{2n+1}} &=&\frac{C_{3}+C_{4}}{C_{1}+C_{2}}=\frac{%
cx_{0}+dy_{0}}{ax_{0}+by_{0}}.  \label{y/x1}
\end{eqnarray}%
On the other hand, from (\ref{xn+1}), we have%
\begin{equation}
x_{2n+1}=\left( a\frac{v_{2n}}{u_{2n}}+b\left( \frac{v_{2n}}{u_{2n}}\right)
^{2}\right) x_{2n}^{3},  \label{x2n+1d}
\end{equation}%
and 
\begin{equation*}
x_{2n+2}=\left( a\frac{v_{2n+1}}{u_{2n+1}}+b\left( \frac{v_{2n+1}}{u_{2n+1}}%
\right) ^{2}\right) x_{2n+1}^{3}
\end{equation*}%
which implies that 
\begin{equation}
x_{2n+2}=\left( a\frac{v_{2n+1}}{u_{2n+1}}+b\left( \frac{v_{2n+1}}{u_{2n+1}}%
\right) ^{2}\right) \left( a\frac{v_{2n}}{u_{2n}}+b\left( \frac{v_{2n}}{%
u_{2n}}\right) ^{2}\right) ^{3}x_{2n}^{3^{2}}  \label{x2n+2d}
\end{equation}%
for all $n\in \mathbb{N}_{0}$. Employing (\ref{y/x0})-(\ref{y/x1}) in (\ref%
{x2n+2d}), we obtain the following equation 
\begin{equation}
x_{2n+2}=\left( a\frac{cx_{0}+dy_{0}}{ax_{0}+by_{0}}+b\left( \frac{%
cx_{0}+dy_{0}}{ax_{0}+by_{0}}\right) ^{2}\right) \left( a\frac{y_{0}}{x_{0}}%
+b\left( \frac{y_{0}}{x_{0}}\right) ^{2}\right) ^{3}x_{2n}^{3^{2}}.
\label{x2n+2}
\end{equation}%
The solution of (\ref{x2n+2}) is given by 
\begin{equation}
x_{2n}=x_{0}^{3^{2n}}\left( \left( a\frac{cx_{0}+dy_{0}}{ax_{0}+by_{0}}%
+b\left( \frac{cx_{0}+dy_{0}}{ax_{0}+by_{0}}\right) ^{2}\right) \left( a%
\frac{y_{0}}{x_{0}}+b\left( \frac{y_{0}}{x_{0}}\right) ^{2}\right)
^{3}\right) ^{\frac{3^{2n}-1}{8}}  \label{adx2n}
\end{equation}%
for all $n\in \mathbb{N}_{0}$. Also, from (\ref{y/x0}), (\ref{x2n+1d}) and (%
\ref{adx2n}), we have 
\begin{eqnarray}
x_{2n+1} &=&x_{0}^{3^{2n+1}}\left( a\frac{y_{0}}{x_{0}}+b\left( \frac{y_{0}}{%
x_{0}}\right) ^{2}\right)  \notag \\
&\times &\left( \left( a\frac{cx_{0}+dy_{0}}{ax_{0}+by_{0}}+b\left( \frac{%
cx_{0}+dy_{0}}{ax_{0}+by_{0}}\right) ^{2}\right) \left( a\frac{y_{0}}{x_{0}}%
+b\left( \frac{y_{0}}{x_{0}}\right) ^{2}\right) ^{3}\right) ^{\frac{%
3^{2n+1}-3}{8}}  \label{adx2n+1}
\end{eqnarray}%
for all $n\in \mathbb{N}_{0}$. On the other hand, since 
\begin{eqnarray*}
y_{2n} &=&\frac{y_{0}}{x_{0}}x_{2n}, \\
y_{2n+1} &=&\frac{cx_{0}+dy_{0}}{ax_{0}+by_{0}}x_{2n+1}
\end{eqnarray*}%
we have 
\begin{equation}
y_{2n}=\frac{y_{0}}{x_{0}}x_{0}^{3^{2n}}\left( \left( a\frac{cx_{0}+dy_{0}}{%
ax_{0}+by_{0}}+b\left( \frac{cx_{0}+dy_{0}}{ax_{0}+by_{0}}\right)
^{2}\right) \left( a\frac{y_{0}}{x_{0}}+b\left( \frac{y_{0}}{x_{0}}\right)
^{2}\right) ^{3}\right) ^{\frac{3^{2n}-1}{8}}  \label{ady2n}
\end{equation}%
and%
\begin{eqnarray}
y_{2n+1} &=&\frac{cx_{0}+dy_{0}}{ax_{0}+by_{0}}x_{0}^{3^{2n+1}}\left( a\frac{%
y_{0}}{x_{0}}+b\left( \frac{y_{0}}{x_{0}}\right) ^{2}\right)  \notag \\
&\times &\left( \left( a\frac{cx_{0}+dy_{0}}{ax_{0}+by_{0}}+b\left( \frac{%
cx_{0}+dy_{0}}{ax_{0}+by_{0}}\right) ^{2}\right) \left( a\frac{y_{0}}{x_{0}}%
+b\left( \frac{y_{0}}{x_{0}}\right) ^{2}\right) ^{3}\right) ^{\frac{%
3^{2n+1}-3}{8}}.  \label{ady2n+1}
\end{eqnarray}%
From the above consideration, we obtain the following theorem which is the
main result of this section.

\begin{theorem}
Consider the system of difference equations (\ref{sys}), where the
parameters $a$, $b$, $c$, $d$ are real numbers such that $\left( \left\vert
a\right\vert +\left\vert b\right\vert \right) \left( \left\vert c\right\vert
+\left\vert d\right\vert \right) \neq 0$. Then, the following statements
hold.

\begin{enumerate}
\item If $ad-bc=0$ and $\left( x_{0},y_{0}\right) \notin Z_{0}$, then the
general solution to the system is given by formulas (\ref{h1}) and (\ref{h2}%
).

\item If $ad-bc\neq 0$, $\left( a-d\right) ^{2}+4bc=0$, $a+d\neq 0$ and $%
\left( x_{0},y_{0}\right) \notin Z_{1}$, then the general solution to the
system is given by formulas (\ref{xnn}) and (\ref{ynn}).

\item If $ad-bc\neq 0$, $\left( a-d\right) ^{2}+4bc\neq 0$, $a+d\neq 0$ and $%
\left( x_{0},y_{0}\right) \notin Z_{2}$, then the general solution to the
system is given by formulas (\ref{xn}) and (\ref{yn}).

\item If $ad-bc\neq 0$ , $\left( a-d\right) ^{2}+4bc\neq 0$, $a+d=0$ and $%
\left( x_{0},y_{0}\right) \notin Z_{3}$, then the general solution to the
system is given by formulas (\ref{adx2n}), (\ref{adx2n+1}), (\ref{ady2n})
and (\ref{ady2n+1}).
\end{enumerate}
\end{theorem}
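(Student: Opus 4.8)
The plan is to prove the theorem by assembling the four regime-by-regime computations carried out above into a single statement, the organizing principle being the algebraic type of the coefficient matrix $A$ in (\ref{A}), which is governed by the three quantities $ad-bc$, the discriminant $(a-d)^2+4bc$, and the trace $a+d$. These three quantities partition the admissible parameter space into the four cases listed, so the first step is to check that the hypotheses of the four items are jointly exhaustive: every $(a,b,c,d)$ with $(|a|+|b|)(|c|+|d|)\neq 0$ falls into exactly one case, and the standing non-triviality assumption $x_0y_0\neq 0$ together with $(x_0,y_0)\notin Z_i$ is what separates the nontrivial solutions from the zero set.

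The common engine in every case is the linearizing substitution (\ref{cv}). First I would recall that, under the assumption (\ref{prod}), the variables $u_n,v_n$ satisfy the linear system (\ref{veksys}) with matrix $A$, and that the substitution forces the ratio identity (\ref{vn/un}), namely $v_n/u_n=y_n/x_n$. Inserting this ratio into the first equation of (\ref{sys}) collapses it to the cube recursion (\ref{xn+1}), i.e. $x_{n+1}=\bigl(a\,\tfrac{v_n}{u_n}+b(\tfrac{v_n}{u_n})^{2}\bigr)x_n^{3}$, whose solution is the product formula (\ref{x1}) read off from the template (\ref{gpe}). The only case-dependent ingredient is therefore the explicit value of $v_n/u_n$, which I would extract from the appropriate closed form of the solution of (\ref{veksys}): from (\ref{ad-bcsol}) of Lemma~\ref{lem01} in Case~1 (where the ratio stabilizes to the constant $c/a$ for $n\geq 1$, which is why those formulas hold on $\mathbb{N}$ rather than $\mathbb{N}_0$); from (\ref{veksyssol}) of Lemma~\ref{lem2}(ii) in Case~2, yielding the rational ratio (\ref{v/u}); from (\ref{unvng}) of Lemma~\ref{lem2}(i) in Case~3, yielding the ratio (\ref{v/ug}); and from the Remark's formulas (\ref{A2n})-(\ref{A2n+1}) in Case~4, where $\lambda_1=-\lambda_2=\sqrt{a^2+bc}$ forces the period-two ratios (\ref{y/x0})-(\ref{y/x1}). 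Feeding each ratio into (\ref{x1}) produces the stated $x_n$, and (\ref{vn/un}) then recovers $y_n$; in Case~4 the period-two behaviour requires splitting into even and odd indices and solving the resulting $x^{9}$-recursion (\ref{x2n+2}), giving the four formulas (\ref{adx2n})-(\ref{ady2n+1}).

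The step I expect to carry the real weight is not any individual computation but the justification that the whole chain is legitimate, i.e. that (\ref{prod}) holds for every $n$ precisely when $(x_0,y_0)$ lies outside the corresponding zero set $Z_i$. I would argue this by contraposition against the preceding lemma: since $x_n=u_n\prod_{k=0}^{n-1}x_ky_k$ and $y_n=v_n\prod_{k=0}^{n-1}x_ky_k$ by (\ref{cv1})-(\ref{cv2}), the product $\prod_{k=0}^{n}x_ky_k$ vanishes for some $n$ if and only if some $u_n$ or $v_n$ vanishes, and the zero-set lemma characterizes exactly those $(x_0,y_0)$ for which this occurs in each regime. Hence on the complement of $Z_i$ no denominator in $v_n/u_n$ vanishes, no factor in the product (\ref{x1}) is an indeterminate, and the substitution is invertible at every step, so the derived formulas are genuinely defined. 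The finishing verification I would make explicit is that these sequences actually satisfy (\ref{sys}): substituting the closed forms back into both equations should reduce, via the ratio identity and the bookkeeping of the exponents $3^{n}$ and the geometric-sum exponents such as $\tfrac{3^{n-1}-1}{2}$, to an identity, confirming that the formulas are not merely necessary consequences of the reduction but true solutions. This back-substitution, though routine, is the one place where the otherwise forward-only derivation must be closed into a genuine equivalence.
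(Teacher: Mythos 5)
Your proposal follows essentially the same route as the paper: the linearizing substitution (\ref{cv}) reducing (\ref{sys}) to the linear system (\ref{veksys}), the ratio identity (\ref{vn/un}) turning the first equation into the cube recursion (\ref{xn+1})--(\ref{x1}), and the case split on $ad-bc$, $(a-d)^2+4bc$, and $a+d$ with the corresponding ratios $c/a$, (\ref{v/u}), (\ref{v/ug}), and the period-two ratios (\ref{y/x0})--(\ref{y/x1}) fed back in, exactly as in the derivation preceding the theorem. Your two added points --- tying the non-vanishing condition (\ref{prod}) to membership outside $Z_i$ via the zero-set lemma, and the closing back-substitution check --- are sound strengthenings of steps the paper leaves implicit, not a different method.
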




\end{document}